\newcommand{\HOX}[1]{\marginpar{\footnotesize sharp1}}
\newtheorem{thm}{Theorem}[section]
\newtheorem{cor}[thm]{Corollary}
\newtheorem{lem}[thm]{Lemma}
\newtheorem{prop}[thm]{Proposition}
\newtheorem{defn}[thm]{Definition}
\newtheorem{rem}[thm]{Remark}
\theoremstyle{definition}
\numberwithin{equation}{section}
\newcommand{\thmref}[1]{Theorem~\ref{sharp1}}
\newcommand{\secref}[1]{\S\ref{sharp1}}
\newcommand{\lemref}[1]{Lemma~\ref{sharp1}}
\renewcommand{\div}{\operatorname{div}}
\renewcommand{\t}[1]{\tilde{sharp1}}
\newcommand{\tr}{\hbox{tr}\,}
\newcommand{\dbtilde}[1]{\tilde{\tilde{sharp1}}}
\def\tilde{\widetilde}
\def \bfo {\begin {eqnarray*} }
\def \efo {\end {eqnarray*} }
\def \ba {\begin {eqnarray*} }
\def \ea {\end {eqnarray*} }
\def \beq {\begin {eqnarray}}
\def \eeq {\end {eqnarray}}
\def \supp {\hbox{supp }}
\def \det {\hbox{det}}
\def\tilde{\widetilde}
\def \bfo {\begin {eqnarray*} }
\def \efo {\end {eqnarray*} }
\def \ba {\begin {eqnarray*} }
\def \ea {\end {eqnarray*} }
\def \beq {\begin {eqnarray}}
\def \eeq {\end {eqnarray}}
\def \supp {\hbox{supp }}
\def \det {\hbox{det}}
\begin{document}
\title[Calder\'on problem with Magnetic Perturbation]{Riemannian and Lorentzian Calder\'on Problem under Magnetic Perturbation}

% Author name(s)
\author[]{Yuchao Yi}
\address{}
\email{}

\begin{abstract}
We study both the Riemannian and Lorentzian Calder\'on problem when a family of Dirichlet-to-Neumann maps are given for an open set of magnetic/electromagnetic potentials. For the Riemannian version, by allowing small perturbations of the magnetic potential, we use the Runge Approximation Theorem to show that the metric can be uniquely determined. There is no gauge equivalence in this case. For the Lorentzian version, we use microlocal analysis to construct the trajectory of null-geodesics via generic perturbations of the electromagnetic potential, hence the conformal class of the metric can be constructed. Moreover, we also show, in the Lorentzian case, the same result can be obtained using generic perturbations of the metric itself.
\end{abstract}

\maketitle
\footnotetext{Department of Mathematics, University of California San Diego. \texttt{Email: yuyi@ucsd.edu}}

\section{Introduction}

Suppose $(M, g)$ is a smooth, connected, compact Riemannian manifold with boundary of dimension $n \geq 3$. Let $\Delta_g$ be the Laplace-Beltrami operator, then the Dirichlet-to-Neumann map (DN map) is defined as
\begin{equation}
    \Lambda_g: C^{\infty}(\partial M)\ni f \mapsto \partial_{\nu}u|_{\partial M} \in C^{\infty}(\partial M)
\end{equation}
where $u$ is the unique solution of
\begin{equation}\label{eq: laplace}
    \Delta_g u = 0 \quad \text{in } M^{\circ}, \quad u|_{\partial M} = f.
\end{equation}
Here $M^\circ$ is the interior of $M$. There exists a natural gauge equivalence for the DN map when $n \geq 3$. Namely, if $\Psi: M \to M$ is a diffeomorphism such that $\Psi|_{\partial M}$ is the identity map, then
\begin{equation}
    \Lambda_g = \Lambda_{\Psi^*g}.
\end{equation}
The well-known anisotropic Calder\'on problem asks if the metric $g$ is determined by the DN map $\Lambda_g$ up to boundary fixing isometry (see \cite[Conjecture A]{LU89}), we also refer to it as the Riemannian Calder\'on problem in this paper. The problem is widely open in dimension $n \geq 3$, see Section \ref{subsec: history} for some history of the problem.

In this work, we consider the magnetic Laplace-Beltrami operator $\Delta_{g,A}$ where $A$ is a smooth 1-form. In local coordinates, it has the form
\[
\Delta_{g,A} = |g|^{-\frac{1}{2}}(\partial_j-iA_j)|g|^{\frac{1}{2}}g^{jk}(\partial_k - iA_k).
\]
The 1-form $A$ is usually referred to as the magnetic potential. It appears frequently in physics, for example when a charged quantum particle moves in a magnetic field. Denote the DN map
\[
\Lambda_{g, A}: f \mapsto (\partial_{\nu}-i\langle A, \nu \rangle)u|_{\partial M}
\]
where $u$ is the unique solution of
\begin{equation}
    \Delta_{g,A}u = 0 \quad \text{in } M^{\circ}, \quad u|_{\partial M} = f.
\end{equation}
In this case, if $\Psi: M \to M$ is a boundary fixing diffeomorphism, and $d\omega$ is a smooth exact 1-form such that $\omega|_{\partial M} = 0$, then
\begin{equation}
    \Lambda_{g,A} = \Lambda_{\Psi^*g, \Psi^*A+d\omega}.
\end{equation}
This is referred to as the natural gauge equivalence. Specifically, if $u$ is the solution with respect to $g$ and $A$, then $e^{i\omega}\Psi^*u$ is the solution with the same Dirichlet boundary data respect to $\Psi^*g$ and $\Psi^*A + d\omega$.

In this work, we show that $g$ can be fully determined without any gauge equivalence if the DN map with magnetic potential is given for all sufficiently small magnetic potentials.

\begin{thm}\label{thm: riemannian perturb A}
   % Let $M$ be a smooth connected compact Riemannian manifold of dimension $n \geq 3$.
   Let $M$ be a smooth connected compact manifold with boundary of dimension $n \geq 3$, and $g_1, g_2$ are two Riemannian metrics on $M$. Suppose $\Lambda_{g_1,A} = \Lambda_{g_2, A}$ for all $A$ in a neighborhood of 0 with respect to $C^k$ topology for some $k \geq 0$, then $g_1 = g_2$.
\end{thm}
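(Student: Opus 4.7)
The plan is to turn the one-parameter family of DN-map identities $\Lambda_{g_1, A} = \Lambda_{g_2, A}$ into pointwise algebraic constraints on the two metrics, using an integral identity obtained by differentiating in $A$ together with the Runge approximation theorem for the Laplace--Beltrami operator.

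First, I specialize to $A = 0$ and invoke the classical boundary determination of Lee--Uhlmann to conclude that $g_1$ and $g_2$ have matching full Taylor expansions on $\partial M$ in boundary normal coordinates. Next, differentiating $\Lambda_{g_j, sB}$ in $s$ at $s = 0$ along an arbitrary smooth 1-form $B$ on $M$ and applying Green's identity produces, for any $g_j$-harmonic functions $u_j, w_j$ with matching boundary data, the integral identity
\[
\int_M \langle B, w_1\, du_1 - u_1\, dw_1\rangle_{g_1}\, dV_{g_1} \;=\; \int_M \langle B, w_2\, du_2 - u_2\, dw_2\rangle_{g_2}\, dV_{g_2}.
\]
Since $B$ may be taken to be any smooth 1-form (by linearity, every such $B$ lies in a rescaled $C^k$-neighborhood of $0$), one obtains the pointwise equality of ``currents'' in $M^\circ$:
\[
\sqrt{|g_1|}\, g_1^{jk}(w_1 \partial_j u_1 - u_1 \partial_j w_1) \;=\; \sqrt{|g_2|}\, g_2^{jk}(w_2 \partial_j u_2 - u_2 \partial_j w_2).
\]

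Second, I set $w_1 = w_2 = 1$ to obtain $G_1\, \nabla u_1 = G_2\, \nabla u_2$ on $M^\circ$, where $G_j := \sqrt{|g_j|}\, g_j^{-1}$. The Runge approximation theorem of Lax--Malgrange--Browder, applied to $\Delta_{g_1}$, ensures that at any $p \in M^\circ$ the global $g_1$-harmonic functions on $M$ can realize any $2$-jet of a locally $g_1$-harmonic function at $p$; in particular, once $\nabla u_1(p) = 0$, the Hessian $\mathrm{Hess}(u_1)(p)$ can be any symmetric matrix that is trace-free with respect to $g_1$. Differentiating the current identity once in a coordinate direction and evaluating at $p$ in $g_1$-normal coordinates gives the linear relation $\mathrm{Hess}(u_1)(p) = \mathrm{Hess}(u_2)(p)\, G_2(p)$. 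Requiring $\mathrm{Hess}(u_2)(p)$ to be symmetric for every admissible choice of $\mathrm{Hess}(u_1)(p)$ forces $G_2(p)$ to commute with every symmetric trace-free matrix, which for $n \geq 3$ implies $G_2(p) = \lambda(p)\, I$, equivalently $g_2(p) = \mu(p)\, g_1(p)$ for a smooth positive function $\mu$ on $M^\circ$.

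Third, substituting $g_2 = \mu\, g_1$ (so that $G_2 = \mu^{(n-2)/2}\, G_1$) reduces the current identity to $du_1 = \mu^{(n-2)/2}\, du_2$ pointwise. Taking the exterior derivative and using $d(du_1) = 0$ yields $d(\mu^{(n-2)/2}) \wedge du_2 = 0$ throughout $M^\circ$, and since by Runge the covector $du_2(p)$ is arbitrary in $T_p^* M$, we conclude that $d(\mu^{(n-2)/2})(p) = 0$ for every $p \in M^\circ$. Hence $\mu$ is constant on $M^\circ$, and combined with $\mu = 1$ on $\partial M$ from the boundary determination, we obtain $\mu \equiv 1$ and therefore $g_1 = g_2$.

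The main obstacle is justifying the Runge approximation at the level of $2$-jets needed for the conformal reduction in the second step. One needs density of $2$-jets of global $g_j$-harmonic functions on $M$ at each interior point $p$ in the space of $2$-jets of local $g_j$-harmonic functions at $p$, which reduces to constructing a local harmonic with prescribed symmetric trace-free Hessian and then Lax--Malgrange--Browder approximating it globally. This is the key analytic input that converts the integral identity into pointwise matrix equations, after which an elementary commutator argument on symmetric matrices forces $g_1 = g_2$.
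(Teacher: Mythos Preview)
Your proposal is correct and follows essentially the same route as the paper: differentiate the DN map in the direction of a smooth $1$-form, combine with boundary determination to obtain the pointwise current identity $\sqrt{|g_1|}\,g_1^{-1}du_1=\sqrt{|g_2|}\,g_2^{-1}du_2$, and then use the Runge-type $2$-jet density (Proposition~\ref{prop: runge}) to force the transfer map to be a constant multiple of the identity. The only packaging differences are that you pair with a general harmonic test function $w$ (and then set $w\equiv 1$) and pass to the pointwise identity by varying $B$ directly, whereas the paper integrates against $w\equiv 1$ from the start and localizes with an approximate identity; and the paper abstracts your Hessian/commutator argument into Lemma~\ref{lem: constant scaling}.
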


The following result is a direct consequence of Theorem \ref{thm: riemannian perturb A}.
\begin{cor}
    For $j = 1, 2$, let $(M_j, g_j)$ be a smooth connected compact Riemannian manifold with boundary of dimension $n \geq 3$. Suppose there exists a boundary fixing diffeomorphism $\Psi: M_1 \to M_2$ such that $\Lambda_{g_1,A} = \Lambda_{g_2, (\Psi^{-1})^*A}$ for all smooth 1-form $A$ on $M_1$ in a neighborhood of 0 with respect to $C^k$ topology for some $k \geq 0$, then $g_1 = \Psi^*g_2$.
\end{cor}

The same problem can be considered when the metric is Lorentzian. Consider a Lorentzian manifold $(M, g)$ of dimension $n \geq 3$ with a timelike boundary $\partial M$. We say the manifold is (lightlike) non-trapping if any inextendible lightlike geodesic $\gamma: [a, b] \to M$ eventually leaves $M$ in finite time, that is, $-\infty < a \leq b < \infty$ and $\gamma(a), \gamma(b) \in \partial M$. For simplicity, we assume the manifold is non-trapping and admissible (see Definition \ref{def: admissible}), in particular, we consider the following electromagnetic wave equation
\begin{equation}\label{eq: general wave equation}
    \begin{cases}
        \Box_{g,A,q} u = 0 \quad \text{in } M^{\circ},\\
        u|_{\partial M} = f,\\
        \supp(u) \subset J^+ (\supp(f)),
    \end{cases}
\end{equation}
where $J^+$ denotes the causal future, and $\Box_{g,A,q}$ is the wave operator with first order perturbation
\[
\Box_{g,A,q} = |g|^{-\frac{1}{2}}(\partial_j - iA_j)|g|^{\frac{1}{2}}g^{jk}(\partial_k-iA_k) + q.
\]
Here $A$ is a smooth 1-form and $q$ is a smooth function. We emphasize that in the Lorentzian case, for example in Lorentzian formulation of electrodynamics, $A$ is almost always the electromagnetic potential instead of the magnetic potential. It satisfies $dA = F$ where $F$ is the electromagnetic field tensor. In this work, in the Lorentzian case we shall refer to $A$ as the electromagnetic potential.

The wave equation \eqref{eq: general wave equation} is well-posed for $f \in H^1_{loc}(\partial M)$ by \cite[Theorem 24.1.1]{Hor85III}. We define the DN map as
\begin{equation}
    \Lambda_{g,A,q}:  H^1_{loc}(\partial M) \ni f \mapsto (\partial_{\nu}-i\langle A, \nu\rangle) u|_{\partial M} \in  \bar{L}^2_{loc}(M^{\circ})
\end{equation}
where $\nu$ is the unit outward normal vector on the boundary, and $\bar{L}^2_{loc}(M^{\circ})$ refers to the space of restrictions to $M^{\circ}$ of locally $L^2$ functions on $M$ (see \cite[Theorem 24.1.1]{Hor85III} and \cite[Page 113]{Hor85III}). Again if $\Psi: M \to M$ is a boundary fixing diffeomorphism and $d\omega$ is a smooth exact 1-form such that $\omega|_{\partial M} = 0$, then
\begin{equation}
    \Lambda_{g,A,q} = \Lambda_{\Psi^*g, \Psi^*A+d\omega, \Psi^*q}.
\end{equation}
The existence of the potential term allows for another gauge equivalence. Namely, let $\phi$ be a smooth function on $M$ such that $\phi|_{\partial M} = \partial_{\nu}\phi|_{\partial M} = 0$, then
\begin{equation}
    \Lambda_{g,A,q} = \Lambda_{e^{-2\phi}g, A+d\omega, e^{2\phi}(q-q_{\phi})}, \quad q_{\phi} = e^{\frac{n-2}{2}\phi}\Box_ge^{\frac{2-n}{2}\phi}.
\end{equation}
We refer to \cite{SY18} for details of the gauge equivalences.

The Lorentzian Calder\'on problem asks if the metric $g$ is uniquely determined by the DN map $\Lambda_{g, A, q}$ up to these natural gauges. Suppose the electromagnetic field is allowed to perturb, namely we have the information of $\Lambda_{g,A,q}$ for a family of different $A$. In this case, one can use this perturbation to reconstruct the conformal class of the metric $g$. 

\begin{thm}\label{thm: lorentzian perturb A}
    Suppose $(M, g)$ is non-trapping and admissible. Let $\mathcal{B}_{\delta}(\mathcal{A})$ be the set of smooth 1-forms that are $\delta$ close to some fixed smooth 1-form $\mathcal{A}$ in the $C^k$ topology for some $k$. For any $U \subset M^{\circ}$ open, denote the set of perturbations of $\mathcal{A}$ in $U$ as
    \[
    \mathcal{B}_{\delta}(\mathcal{A}; U) = \{A \in \mathcal{B}_{\delta}(\mathcal{A}): \supp(A - \mathcal{A}) \subset U\}.
    \]
    If $L$ is the lens relation (see Definition \ref{def: lens relation}) and $L(x', \xi') = (y', \eta')$, then the unique null-geodesic corresponding to $(y', \eta'; x', \xi')$ is given by
    \begin{equation}\label{eq: determine gamma from A}
        \begin{split}
            \gamma = \{x \in M^{\circ}: \forall &U \ni x \text{ open, } \exists A \in \mathcal{B}_{\delta}(\mathcal{A}; U) \text{ s.t. }\\
            &\sigma(\Lambda_{g,A,q})|_{(y', \eta'; x', \xi')} \neq \sigma(\Lambda_{g,\mathcal{A},q})|_{(y', \eta'; x', \xi')}\}.
        \end{split}
    \end{equation}
    Moreover, the set of such $A$ is generic in $\mathcal{B}_{\delta}(\mathcal{A}; U)$. In particular, the conformal class of $g$ can be constructed from $\{\Lambda_{g,A,q}: A \in \mathcal{B}_{\delta}(\mathcal{A})\}$.
\end{thm}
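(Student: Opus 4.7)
The plan is to use the fact that, microlocally along the lens relation, $\Lambda_{g,A,q}$ is a Fourier integral operator whose principal symbol depends on $A$ only through a line integral along the underlying null geodesic. A compactly supported perturbation of $A$ then affects this principal symbol if and only if the support of the perturbation meets the geodesic. The main technical step, and the hardest part, is rigorously identifying the principal symbol of $\Lambda_{g,A,q}$ at lens-related points with the claimed multiplicative dependence on the line integral of $A$; this requires care with half-densities, Maslov factors, and possible multi-valuedness of $L$ near glancing directions.

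\textbf{Principal symbol of the DN map.} Fix $(x', \xi')$ in the domain of $L$, set $(y', \eta') = L(x', \xi')$, and let $\gamma$ be the corresponding null geodesic in $M^{\circ}$. Applied to a boundary Lagrangian distribution concentrated at $(x', \xi')$, the DN map $\Lambda_{g,A,q}$ produces a boundary trace whose wavefront at $(y', \eta')$ arises from propagation of singularities for $\Box_{g,A,q}$ along the null bicharacteristic lying over $\gamma$. The subprincipal symbol of $\Box_{g,A,q}$ contributes a term proportional to $g^{jk}\xi_j A_k = \langle A, \dot\gamma \rangle$ to the transport equation for the principal amplitude along this bicharacteristic, so integrating from $(x', \xi')$ to $(y', \eta')$ gives
\begin{equation*}
\sigma(\Lambda_{g,A,q})(y', \eta'; x', \xi') \;=\; \sigma(\Lambda_{g,0,q})(y', \eta'; x', \xi') \cdot \exp\!\left(-i\int_{\gamma} \langle A, \dot\gamma \rangle\, ds\right),
\end{equation*}
where the prefactor depends only on $(g,q)$ and on the endpoint data.

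\textbf{Detecting geodesic points.} If $x \notin \gamma$, then since $\gamma$ is closed in $M^{\circ}$ we may pick an open $U \ni x$ disjoint from $\gamma$; for every $A \in \mathcal{B}_{\delta}(\mathcal{A}; U)$, the perturbation $A - \mathcal{A}$ is supported off $\gamma$, so the line integral, and hence the principal symbol, agrees with that of $\Lambda_{g,\mathcal{A},q}$. Conversely, if $x \in \gamma$ and $U \ni x$ is any open set, then $\gamma \cap U$ is a nonempty open arc; choose a smooth bump $\chi$ supported in $U$ with $\chi(p) = 1$ at a point $p \in \gamma \cap U$, and a smooth 1-form $\alpha$ with $\langle \alpha, \dot\gamma(p) \rangle \neq 0$. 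For $|\epsilon|$ small enough, $A = \mathcal{A} + \epsilon \chi \alpha \in \mathcal{B}_{\delta}(\mathcal{A}; U)$ and
\begin{equation*}
\int_{\gamma} \langle A - \mathcal{A}, \dot\gamma \rangle\, ds = \epsilon \int_{\gamma \cap U} \chi \langle \alpha, \dot\gamma \rangle\, ds \neq 0,
\end{equation*}
so the two principal symbols differ. Since $A \mapsto \int_{\gamma} \langle A, \dot\gamma \rangle\, ds$ is a continuous linear functional on $\mathcal{B}_{\delta}(\mathcal{A}; U)$ which is not constantly equal to its value at $\mathcal{A}$, the level set where the symbols coincide is closed and nowhere dense, so the set of detecting $A$ is open and dense in $\mathcal{B}_{\delta}(\mathcal{A}; U)$, hence generic.

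\textbf{Reconstruction of the conformal class.} The two implications above establish \eqref{eq: determine gamma from A}. Varying $(x', \xi')$ over the domain of $L$ recovers every boundary-to-boundary null geodesic as an unparametrized subset of $M^{\circ}$. Since $(M, g)$ is non-trapping, these geodesics cover $M^{\circ}$; by the classical fact that two Lorentzian metrics on a connected manifold sharing the same unparametrized null geodesics are conformally related, the conformal class of $g$ is determined by $\{\Lambda_{g,A,q} : A \in \mathcal{B}_{\delta}(\mathcal{A})\}$.
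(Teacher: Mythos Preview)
Your proof is correct and follows essentially the same route as the paper's: both invoke the principal symbol formula for the DN map (Proposition~\ref{prop: principal symbol formula}) to reduce the symbol comparison at $(y',\eta';x',\xi')$ to whether $\int_\gamma (A-\mathcal{A})(\dot\gamma)$ is a multiple of $2\pi$, and then argue by locality of this line integral together with a continuity/density argument for genericity. One small point to tighten: as written, your bump construction $\epsilon\chi\alpha$ does not automatically force $\int_{\gamma\cap U}\chi\langle\alpha,\dot\gamma\rangle\,ds\neq 0$ unless you also shrink $\operatorname{supp}\chi$ so that $\langle\alpha,\dot\gamma\rangle$ keeps a constant sign there---the paper sidesteps this by prescribing $A'(\dot\gamma(t))=\phi(t-t_0)$ with $\int\phi=1$ directly along the geodesic and then extending.
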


\begin{rem}
    For the definition of principal symbol $\sigma(\Lambda_{g,A,q})$, see Section \ref{sec: lorentzian preliminary}. A Lorentzian manifold is called non-trapping if any inextendible null-geodesic has two end points in the boundary. When the manifold is non-trapping and admissible (see Definition \ref{def: admissible}), all null-geodesics in the interior would eventually reach the boundary transversally, which makes the computation easier. We emphasize that the idea of the proof works for more general Lorentzian manifolds:
    \begin{enumerate}
        \item For example, the strictly convexity can be relaxed, then besides Proposition \ref{prop: principal symbol formula}, one also needs to analyze the propagation of singularity along gliding rays or diffractive rays. 
        \item The non-trapping condition can also be removed, and one can only construct the null-geodesics that are non-trapping.
        \item If the DN map is the partial data version as in \cite{YZ25}, then one can construct all the broken null-geodesics that travels from source region to observation region.
    \end{enumerate}
    Finally, just like Theorem \ref{thm: riemannian perturb A}, since we work on a fixed base manifold, the perturbation of $A$ eliminates the possibility of having boundary fixing isometry as a gauge equivalence.
\end{rem}

Intuitively, the theorem states that perturbations of the electromagnetic potential in a test region can help determine whether the null-geodesic passes through it or not. We emphasize that the theorem is robust in the sense that if $x \in \gamma$, then a generic local perturbation of the electromagnetic potential in a neighborhood of $x$ would reveal this fact. Moreover, one does not need to have any a priori information about $\mathcal{A}$, the only thing required is the ability to add perturbation at various locations. It is worth mentioning that if $A - \mathcal{A} = d\omega$ and $\omega|_{\partial M} \equiv 0$, then by the gauge equivalence the DN map does not change. In fact the solution simply changes from $u$ to $e^{i\omega}u$. In physics, this corresponds to $\mathcal{A} + d\omega$ and $\mathcal{A}$ represent the same electromagnetic field tensor $F$. On the other hand, they still represent the same electromagnetic field $F$ if $A - \mathcal{A}$ is a closed 1-form, but we no longer have the simple change in solutions as in the exact case. As a result, a change in closed 1-form may be used to detect the trajectory of null-geodesics. Specifically, in the proof we will show that the criterion is given by the shift in phase, in physics this is closely related to the well-known phenomenon called the Aharonov–Bohm effect, we refer to Remark \ref{rem: ab effect} for more details. See also \cite{AB59, AB61}.

Besides perturbing the electromagnetic potential, a similar problem can be considered when the perturbation is with respect to other parameters in the equation. In particular, we study the case where the metric itself is allowed to perturb. Consider the wave equation
\begin{equation}\label{eq: wave}
    \begin{cases}
        \left(\Box_g+q_g\right)u = 0\\
        u|_{\partial M} = f\\
        \supp(u) \subset J^+ (\supp(f))
    \end{cases}
\end{equation}
where the potential term $q_g$ may depend on $g$ locally. That is, $q_g(x)$ depends on $g$ only on an arbitrarily small neighborhood of $x$. For instance, the potential term in the Yamabe equation
\begin{equation}\label{eq: Yamabe}
    \left(\Box_g-\frac{n-2}{4(n-1)}R_g\right)u = 0
\end{equation}
is related to scalar curvature $R_g$, which depends on $g$. Simple computation shows that a conformal change of $g$ which is the identity map on the boundary does not change the DN map. On the other hand, the Klein-Gordon equation
\begin{equation}\label{eq: Klein-Gordon}
    \left(\Box_g+m^2\right)u = 0
\end{equation}
provides an example where the potential term is independent of $g$. We show that the conformal class of the metric can also be recovered using perturbation of the metric.

\begin{thm}\label{thm: lorentzian perturb g}
    Suppose $(M, g)$ is non-trapping and admissible, denote $\Lambda_{g}$ the DN map of \eqref{eq: wave} with $q_g$ depending on $g$ locally. If $L$ is the lens relation (see Definition \ref{def: lens relation}) and $L(x', \xi') = (y', \eta')$, then the unique null-geodesic associated with $(y', \eta'; x', \xi')$ can be constructed by the following equivalence conditions.
    \begin{enumerate}
        \item $\gamma$ passes through $x \in M^{\circ}$.
        \item For any $U \ni x$ open neighborhood and conormal distribution $f$ singular at $(x', \xi')$, there exists smooth symmetric 2-tensor $h$ compactly supported in $U$ such that $(y', \eta')$ is in the wavefront set of $\partial_{\epsilon}|_0\Lambda_{g-\epsilon h}(f)$.
    \end{enumerate}
    Moreover, the set of such $h$ is generic in the space of smooth symmetric 2-tensors compactly supported in $U$ with respect to $C^k$ topology for any $k$. In particular, let $\mathcal{B}_{\delta}(g)$ be the set of smooth Lorentzian metrics that are $\delta$ close to $g$ in $C^k$ topology for some $k$, then the conformal class of $g$ can be constructed from $\{\Lambda_{g'}: g' \in \mathcal{B}_{\delta}(g)\}$.
\end{thm}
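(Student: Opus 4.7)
The plan is to linearize the DN map with respect to the metric and apply propagation of singularities, paralleling the argument of Theorem~\ref{thm: lorentzian perturb A} but with a metric perturbation in place of an electromagnetic one. Given $h$ a smooth symmetric $2$-tensor compactly supported in $U$, let $u_\epsilon$ solve \eqref{eq: wave} for the metric $g-\epsilon h$ with boundary data $f$, and set $v := \partial_\epsilon|_{\epsilon=0} u_\epsilon$. Differentiating in $\epsilon$ yields $(\Box_g + q_g)\,v = L_h u$ in $M^\circ$ with $v|_{\p M}=0$, where $L_h := \partial_\epsilon|_{\epsilon=0}(\Box_{g-\epsilon h} + q_{g-\epsilon h})$ is a second-order differential operator supported in $U$ whose principal symbol at $(x,\xi)$ equals $h^{jk}(x)\xi_j\xi_k$ (indices raised by $g$). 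The linearization of the potential term $q_{g-\epsilon h}$ is a multiplication operator on $u$, hence of order zero and subprincipal, even when $q_g$ depends on derivatives of $g$ as in \eqref{eq: Yamabe}. Consequently $\partial_\epsilon|_{\epsilon=0}\Lambda_{g-\epsilon h}(f) = \partial_\nu v|_{\p M}$.

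Next, choose $f$ as a conormal distribution with wavefront set near $(x',\xi')$. Under the admissibility and non-trapping hypotheses, standard parametrix theory for $\Box_g + q_g$ gives $u$ as a Lagrangian distribution associated to the forward null-geodesic flowout of $(x',\xi')$, with nowhere-vanishing principal symbol along this flowout up to $\p M$. The source $L_h u$ is therefore a Lagrangian distribution supported microlocally in $U\cap\gamma$, with principal symbol at a flowout point $(z,\zeta)$ equal to $h^{jk}(z)\zeta_j\zeta_k$ times the symbol of $u$ at $(z,\zeta)$. Propagation of singularities for $\Box_g + q_g$, together with the boundary trace, implies that the wavefront set of $\partial_\epsilon|_0\Lambda_{g-\epsilon h}(f)$ lies in the image under the lens relation of the forward flowout of $\WF(L_h u)$.

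Now suppose $x\in\gamma$: then $U\cap\gamma$ contains a nontrivial arc, and by tracking the parametrix to the boundary one finds that the leading contribution to the principal symbol of $\partial_\nu v|_{\p M}$ at $(y',\eta')$ takes the form
\[
c \int_{U\cap\gamma} h^{jk}(\gamma(t))\,\dot\gamma_j(t)\,\dot\gamma_k(t)\,a(t)\,dt,
\]
with $c\neq 0$ and $a$ a nowhere-vanishing amplitude. For each $t$, $\dot\gamma(t)$ is a nonzero tangent vector, so the pointwise linear form $h\mapsto h^{jk}\dot\gamma_j\dot\gamma_k$ is not the zero functional on symmetric $2$-tensors; the null condition $g^{jk}\dot\gamma_j\dot\gamma_k=0$ does not force $h^{jk}\dot\gamma_j\dot\gamma_k$ to vanish because $h$ need not be proportional to $g$. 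The set of $h\in C^k_c(U;S^2T^*M)$ annihilating the integral is therefore a closed hyperplane, whose complement is open and dense. For any such generic $h$, $(y',\eta')\in \WF(\partial_\epsilon|_0\Lambda_{g-\epsilon h}(f))$. Conversely, if $x\notin\gamma$, shrink $U$ so that $U\cap\gamma=\emptyset$; then $L_h u$ is smooth along $\gamma$ and the wavefront condition fails for every $h$. Varying $(x',\xi')$ reconstructs all null-geodesics in $M^\circ$, and the conformal class of $g$ is then recovered from the classical fact that the collection of null-geodesics determines a Lorentzian metric up to conformal rescaling.

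The main obstacle is the principal-symbol calculation producing the above integral together with the verification that $\partial_\epsilon|_0 q_{g-\epsilon h}$ is genuinely subprincipal in every admissible case (in particular for curvature-type potentials such as $R_g$): one must separate the variation in $h$ at the level of the operator from its algebraic action on the Lagrangian symbol of $u$, and check that the resulting zeroth-order contributions do not mask the second-order variation of $\Box_g$ or conspire to cancel the pairing $h(\dot\gamma,\dot\gamma)$. The bookkeeping is routine but delicate, particularly for short arcs $U\cap\gamma$ where the symbol integral is small yet generically nonzero.
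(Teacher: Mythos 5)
Your proposal is correct and follows essentially the same route as the paper: linearize in $\epsilon$, observe that the variation of the local potential term is subprincipal, identify the principal symbol of the source as $h^{jk}\xi_j\xi_k$ times the symbol of $u$, integrate the transport equation along the flowout to get the functional $h\mapsto\int_\gamma h(\dot\gamma,\dot\gamma)$, and conclude genericity from the nonvanishing of this functional together with the trivial converse when $U$ misses the flowout. The ``delicate bookkeeping'' you defer is exactly what the paper carries out via the half-density symbol calculus and the Duistermaat--H\"ormander transport equation $\mathcal{L}_{H_p}\sigma(\tilde v)=i\sigma(\text{source})$ evaluated on a frame invariant under the Hamiltonian flow.
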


Clearly, one actually recovers the conformal class of all the metrics in $\mathcal{B}_{\delta}(g)$.

\subsection{History}\label{subsec: history}
Both the Riemannian Calder\'on problem and the Lorentzian Calder\'on problem are widely open for dimension $n \geq 3$.

The Calder\'on problem originated from the Electrical Impedance Tomography (EIT) problem considered by Calder\'on in \cite{Cal80}. EIT asks the following: given a bounded domain $\Omega\subset\mathbb{R}^n$ with an unknown (possibly anisotropic) conductivity tensor $\gamma(x)$, can one recover $\gamma$ from the Dirichlet-to-Neumann map which sends imposed voltages on $\partial\Omega$ to the resulting boundary currents on $\partial \Omega$?  Equivalently, the DN map encodes exactly the voltage-to-current measurements at the boundary. In the anisotropic setting and for $n\geq3$, this inverse boundary value problem is precisely Calder\'on’s original formulation of recovering the conductivity tensor from boundary data (up to the natural obstruction of boundary fixing diffeomorphisms). For the 2-dimensional Calder\'on problem, besides the boundary fixing isometry, a boundary fixing conformal change is also a gauge equivalence. The dimension 2 case has been fully solved in \cite{LU01}, but the higher dimension Calder\'on problem remains one of the central open questions in the theory of inverse problems. For more conductivity results, we refer to \cite{SU87, KV84, KV85, HT13, AP06}; and we refer to \cite{Sal13, Uhl14} for more details on the history.

% The Calder\'on problem was first introduced by Calder\'on in \cite{Cal80}, where he considered whether one can determine the electrical conductivity of a medium by making voltage and current measurements at the boundary of the medium. This problem is also called the Electrical Impedance Tomography (EIT). The problem generalizes to determining the Riemannian metric given Dirichlet-to-Neumann map, and is referred to as the anisotropic Calder\'on problem. In dimension $n=2$, besides the boundary fixing isometry, a boundary fixing conformal change is also a gauge equivalence. The dimension 2 case has been fully solved in \cite{LU01}, but the higher dimension case remains one of the most important open questions in the field of inverse problems, see \cite{Sal13, Uhl14} for a survey.

When $n \geq 3$ and the metric is real analytic, the problem is solved in \cite{LTU03, LU01, LU89}. In \cite{LLS20}, Lassas, Liimatainen and Salo proved the same result using a different approach. They used Poisson embedding to identify interior points with distributions on the boundary. When the metric is only $C^{\infty}$, in \cite{DKSU09} and \cite{DKLS16}, it was shown that a metric in a fixed conformal class is uniquely determined by boundary measurements if the metric is conformally transversally anisotropic.

The Lorentzian Calder\'on problem is relatively new comparing to the Riemannian version, but there has been some important progress in the past decade. One of the most important results is the recovery of the potential term in \cite{AFO22} and \cite{AFO24}. In \cite{AFO22}, Alexakis, Feizmohammadi and Oksanen proved a type of unique continuation principle when the metric is given under some geometric assumptions, and used it to determine the potential term. When $A = 0$ and the conformal class of $g$ is given, determining $g$ reduces to determining the potential term $q$. As a result, they recovered the conformal factor under some a priori conditions on the metric. In \cite{AFO24}, they proved the same result for metrics sufficiently close to the Minkowski metric. For other results related to the recovery of potential term, see \cite{Ste89, FIKO21, Isa91, RS91}. Note that both of Theorem \ref{thm: lorentzian perturb A} and Theorem \ref{thm: lorentzian perturb g} recover the conformal class of the metric, one can then combine with results from \cite{AFO22} and \cite{AFO24} to recover the conformal factor.

In \cite{Esk07, Esk17}, Eskin proved the Lorentzian Calder\'on problem when the metric and potential are both real analytic in time. The main tool used there is Boundary Control method, which was introduced by Belishev in \cite{Bel87} and is closely related to the Unique Continuation Principle, see \cite{BK92, Bel07, Tat99, RZ98} for details.

As for the recovery of conformal class of the metric, one important result comes from when the equation has non-linearity. Specifically, in \cite{KLU18}, Kurylev, Lassas and Uhlmann proved that the conformal class of the metric can be determined when the wave equation is non-linear and of the form
\[
\Box_gu + a(x)u^2 = 0.
\]
They pioneered the use of higher order linearization and microlocal analysis to generate earliest light observation sets, which recovers the conformal class of the metric. The earliest light observation set is essentially the shape of the light cone intersected with the observation set. The method highly relies on the non-linearity, as it allows the waves to interact with each other and create a collision point in the interior. The same idea also helps recover the non-linearity itself, and the study of non-linear hyperbolic inverse problems has been an active research area ever since, for example see \cite{UZ23, HUZ22, CLOP22, CLOP21, OSSU24, LUW18, Yi24, SUW22, UZ22} and the surveys \cite{Las25,UZ21}. This approach, however, does not work for linear equations, since two waves do not interact with each other.

Finally, there has been some boundary determination results. In \cite{SY18}, Stefanov and Yang proved that the local DN map is a pseudodifferential operator, which stably recovers the jet bundle of $g, A$ and $q$ on the boundary. They also proved that away from the diagonal, the DN map is a Fourier Integral Operator with canonical relation being the lens relation (see Section \ref{sec: lorentzian preliminary}). Given the metric, they proved stable recovery of lens relation and light ray transform of $A$ and $q$ on the boundary. In \cite{YZ25}, the author and Zhang proved boundary determination under partial data assumption using the Fourier Integral Operator part of the DN map. Specifically, the Dirichlet initial data is supported on a fixed region $U$, while the observation is made on a fixed subset of the boundary $V$, and $U$ and $V$ are disjoint from each other. In both \cite{SY18} and \cite{YZ25}, there is no interior recovery of the metric.

To the author's knowledge, metric recovery from a family of DN maps indexed by the perturbation of parameters appears to be new in both Riemannian and Lorentzian settings. Classical Calder\'on results typically assume a single DN map and focus on boundary determination or single-map interior uniqueness. Almost all of the existing results are obtained under additional structural assumptions of the metric (for example, analyticity \cite{LU01}, CTA structures \cite{DKSU09, DKLS16}, or curvature bounds \cite{FIKO21}), and many of them require the metrics to be in the same conformal class. In contrast, we add no such assumptions on the metric structure, but instead work with an active, coefficient-parametric data model: an open family of magnetic or electromagnetic perturbations and the associated family of DN maps.

\subsection{Idea of the proofs}
We now briefly discuss the ideas behind the proofs and make comparison between the Riemannian and Lorentzian versions. As mentioned before, one of the key points in recovering the interior metric is the ability to focus on an interior point. Non-linearity and microlocal analysis in the Lorentzian setting provide this ability via wave interactions and propagation of singularities. However, in a linear equation the solutions do not interact, and in the Riemannian case there is no propagation.

To solve the first problem, note that even though the operator is linear, its dependence on the parameters may be non-linear. Specifically, in our case, both $\Delta_{g,A}$ and $\Box_{g,A,q}$ depend non-linearly on $A$ and on $g$ itself, which provides the possibility of obtaining focused information around interior points via local perturbation. For the Lorentzian case, we again use microlocal analysis, which allows us to focus on a single null-geodesic and analyze the propagation of singularities. If the null-geodesic $\gamma$ does not travel through some interior point $x$, then a smooth perturbation in a sufficiently small neighborhood of $x$ should not affect the propagation of the singularity along $\gamma$. The main work lies in showing that if $\gamma$ does pass through $x$, then some particular perturbation around $x$ would affect the propagation of the singularity, and that this change can be picked up from the boundary observation. We actually show that such perturbations are generic. Together, they provide a criterion for determining whether $\gamma$ passes through a point, thereby recovering the trajectory of the null-geodesic.

In Theorem \ref{thm: lorentzian perturb A}, we perturb the electromagnetic potential to probe the propagation of particles along null-geodesics. We show in the proof that this only affects the spin of the particles, and hence the trajectory can be recovered by detecting changes in particle spin. In Theorem \ref{thm: lorentzian perturb g}, we use the perturbation of the metric itself. The trajectory of the chosen null-geodesic may change in this case. One needs to show that this indeed happens for some perturbation, and this change can be detected from boundary measurements.

The same method obviously does not work for the Riemannian case, since propagation is a feature specific to hyperbolic equations. In order to solve this problem for the Riemannian setting, we turn to the Runge Approximation Theorem, which is only enjoyed by elliptic and parabolic equations. Again, even though there is no propagation, we can use the non-linear dependence on $A$ and local perturbation to focus around an interior point. We show in Theorem \ref{thm: riemannian perturb A} that this focusing gives us information of the gradient of the solution, and turns the problem into determining whether the following transformation gives a gauge equivalence.

Suppose $B$ is an isomorphism on $T^*M$, and up to a diffeomorphism we may assume $B: T^*_xM \to T^*_xM$ for all $x \in M$. Suppose $B$ is the identity map on the boundary, and the boundary is connected. Consider some Riemannian metric $g$, and suppose
\begin{equation}
    \Delta_g v = 0 \implies B(dv) \text{ is exact.}
\end{equation}
Consider the new metric given in local coordinates by
\begin{equation}
    \tilde{g}^{ij} = \det(B)^{\frac{1}{n-2}}(B^{-1})^j_kg^{ik}.
\end{equation}
For every $v$ such that $\Delta_g v = 0$, $B(dv) = du$ for some $u$, and $dv = du$ on boundary. By adding a constant, we may assume $u|_{\partial M} = v|_{\partial M}$. Since by construction $g$ and $\tilde{g}$ also agree on the boundary, they have the same unit outward normal vector $\nu$, so $\partial_{\nu}u|_{\partial M} = \partial_{\nu}v|_{\partial M}$. Finally, by construction,
\begin{equation}
    \Delta_{\tilde{g}}u = |\tilde{g}|^{-\frac{1}{2}}\partial_j(|\tilde{g}|^{\frac{1}{2}}\tilde{g}^{jk}\partial_ku) = |\tilde{g}|^{-\frac{1}{2}}\partial_j(|\tilde{g}|^{\frac{1}{2}}\tilde{g}^{jk}B_k^i\partial_iv) = \frac{|g|^{\frac{1}{2}}}{|\tilde{g}|^{\frac{1}{2}}}\Delta_gv = 0.
\end{equation}
Hence $B$ gives a gauge equivalence $\Lambda_g = \Lambda_{\tilde{g}}$. We use a Runge Approximation type of result to show that the only such transformation in the Riemannian setting is the identity map, which proves Theorem \ref{thm: riemannian perturb A}.

\subsection{Outline of the paper}
The structure of the paper is as follows.

The Riemannian version is studied in Section \ref{sec: Riemannian}. In Section \ref{sec: Runge Approximation Theorem}, we study the aforementioned transformation; in Section \ref{sec: proof of riemannian thm}, we prove Theorem \ref{thm: riemannian perturb A}.

The Lorentzian version is studied in Section \ref{sec: Lorentzian}. In Section \ref{sec: lorentzian preliminary}, we include the necessary tools and results; in Section \ref{sec: proof of theorem 1}, we prove Theorem \ref{thm: lorentzian perturb A}; in Section \ref{sec: proof of theorem 2}, we prove Theorem \ref{thm: lorentzian perturb g}.

\section*{Acknowledgment}
The author would like to thank Gunther Uhlmann for reading the draft version of this paper and for giving helpful comments.

%%%%%%%%%%%%%%%%%%%%%%%%%%%%%%%%%%%%%%%%%%%%%%%%%%%%%%%%%%%%%%%%%%%%
\section{Riemannian Calder\'on problem}\label{sec: Riemannian}

\subsection{Preliminaries}\label{sec: Runge Approximation Theorem}

We first include a Runge Approximation type of result which will be used later, for a proof see for example the Appendix of \cite{LLS20}.
\begin{prop}[{\cite[Proposition A.5]{LLS20}}]\label{prop: runge}
    Let $\Gamma$ be a non-empty open subset of $\partial M$. Let $p \in M^{\circ} \cup \Gamma$, let $a_0 \in \mathbb{R}$, let $\xi_0 \in T_p^*M$, and let $H_0$ be a symmetric 2-tensor at $p$ satisfying $\tr_g(H_0) = 0$. There exists $f \in C_c^{\infty}(\Gamma)$ such that the solution of
    \begin{equation}
        -\Delta_gu = 0 \quad \text{in } M, \quad u|_{\partial M} = f
    \end{equation}
    satisfies $u(p) = a_0$, $du|_p = \xi_0$ and $\text{Hess}_g(u)|_p = H_0$.
\end{prop}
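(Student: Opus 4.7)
The plan is a duality argument combined with unique continuation. Consider the linear map
$$T : C_c^\infty(\Gamma) \to \mathbb{R} \oplus T_p^*M \oplus S^2 T_p^*M, \qquad T(f) := (u(p),\, du|_p,\, \text{Hess}_g(u)|_p),$$
where $u$ is the harmonic extension of $f$. Since $\Delta_g u = 0$ forces $\tr_g \text{Hess}_g(u) = 0$, the image of $T$ lies in the finite-dimensional subspace $V := \mathbb{R} \oplus T_p^*M \oplus \{H \in S^2 T_p^*M : \tr_g H = 0\}$. The goal is to show $T$ is onto $V$. If not, finite-dimensional duality supplies constants $(\alpha_0, V_0, K_0)$, not all zero, with $K_0$ a symmetric $(2,0)$-tensor that (modulo the natural ambiguity by multiples of $g^{-1}|_p$) may be chosen with $\tr_g K_0 = 0$, such that
$$\ell(u) := \alpha_0 u(p) + \langle V_0, du|_p \rangle + \langle K_0, \text{Hess}_g(u)|_p \rangle = 0$$
for every harmonic extension $u$ of $f \in C_c^\infty(\Gamma)$. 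I aim to derive a contradiction.

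Next, I would express $\ell$ through the Poisson kernel. Let $G(p, y)$ be the Dirichlet Green's function of $-\Delta_g$ and $P(p, y) := -\partial_{\nu(y)} G(p, y)$ the Poisson kernel, so that $u(p) = \int_{\partial M} P(p, y) f(y)\, dS(y)$. Differentiating in the pole variable $p$ and substituting gives $\ell(u) = \int_\Gamma f(y) \Phi(y)\, dS(y)$ with
$$\Phi(y) := \alpha_0 P(p, y) + V_0^i \partial_{p_i} P(p, y) + K_0^{ij} \partial^2_{p_i p_j} P(p, y).$$
Since $\ell$ annihilates every $f \in C_c^\infty(\Gamma)$, $\Phi \equiv 0$ on $\Gamma$. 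Define
$$w(y) := \alpha_0 G(p, y) + V_0^i \partial_{p_i} G(p, y) + K_0^{ij} \partial^2_{p_i p_j} G(p, y), \qquad y \in M \setminus \{p\}.$$
Because $p$-derivatives commute with $\Delta_g$ off the diagonal, $w$ is $\Delta_g$-harmonic on $M \setminus \{p\}$; by construction $w|_{\partial M} = 0$ (since $G(p, \cdot)|_{\partial M} = 0$) and $\partial_\nu w|_\Gamma = -\Phi \equiv 0$. Unique continuation from Cauchy data on the open boundary piece $\Gamma$ then forces $w \equiv 0$ on the connected set $M \setminus \{p\}$.

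The remaining step, which I expect to be the main obstacle, is to deduce $\alpha_0 = V_0 = 0$ and $K_0 = 0$ from $w \equiv 0$ by reading off the singularity of $G(p, \cdot)$ at $p$. In geodesic normal coordinates centered at $p$, the Hadamard parametrix gives $G(p, y) = c_n r^{2-n} + O(r^{4-n})$ with $r = |y|$, and the three pieces of $w$ contribute singularities of distinct leading homogeneous orders: $\alpha_0 G$ is of order $r^{2-n}$; $V_0^i \partial_{p_i} G$ is of order $r^{1-n}$ with angular profile proportional to $V_0(\hat y)$; and $K_0^{ij} \partial^2_{p_i p_j} G$ is of order $r^{-n}$ with angular profile proportional to $n K_0(\hat y, \hat y) - \tr_g K_0$. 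Reading $w \equiv 0$ order by order, starting with the most singular term $r^{-n}$, then $r^{1-n}$, then $r^{2-n}$, successively forces $K_0(\hat y, \hat y) = \tr_g K_0 / n$ for every unit $\hat y$ — so $K_0$ is a multiple of $g^{-1}|_p$, and combined with $\tr_g K_0 = 0$ this yields $K_0 = 0$ — then $V_0 = 0$, and finally $\alpha_0 = 0$, the desired contradiction. The boundary case $p \in \Gamma$ is handled identically after localizing near $p$: the leading-singularity extraction is purely local, and the unique continuation step from Cauchy data on $\Gamma$ is unaffected.
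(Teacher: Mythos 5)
The paper itself does not prove this proposition; it quotes it from the appendix of \cite{LLS20}. Your argument for interior $p$ is essentially the standard one used there: duality against the finite-dimensional jet space, representation of the annihilating functional by the Green's function and its pole derivatives, unique continuation from vanishing Cauchy data on $\Gamma$, and then reading off the distinct homogeneities $r^{-n}$, $r^{1-n}$, $r^{2-n}$ of the Hadamard singularity to force $K_0=0$ (using $\tr_g K_0=0$), then $V_0=0$, then $\alpha_0=0$. That part is correct, modulo the routine remark that $\mathrm{Hess}_g u$ is the covariant Hessian, so either work in normal coordinates at $p$ or absorb the Christoffel correction into $V_0$; and this interior case is the only one the paper actually uses (Lemma \ref{lem: constant scaling} fixes an interior point).

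The genuine gap is the boundary case $p\in\Gamma$, which you dismiss as ``handled identically.'' It is not: by symmetry of the Dirichlet Green's function and the boundary condition, $G(p,\cdot)\equiv 0$ whenever $p\in\partial M$, so the Poisson representation $u(p)=\int_{\partial M}P(p,y)f(y)\,dS(y)$ degenerates (indeed $u(p)=f(p)$ directly), tangential $p$-derivatives of $G(p,\cdot)$ vanish as well, and the object $w$ you build no longer carries the full data $(\alpha_0,V_0,K_0)$ in its singularity at $p$. Concretely, from $w\equiv 0$ you could never recover $\alpha_0$, since $\alpha_0 G(p,\cdot)\equiv 0$ regardless of $\alpha_0$. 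For boundary $p$ the annihilating functional is represented instead by distributions supported at $p$ \emph{on} $\partial M$ (derivatives of $\delta_p$ paired with $f$ itself and with $\Lambda_g f$), and the singularity analysis must be redone for a pole sitting on the boundary (half-space model with image charge), or one must reduce to the interior case by a limiting argument $p_j\to p$, $p_j\in M^{\circ}$, with uniform control. As written, the one-sentence treatment of $p\in\Gamma$ does not constitute a proof of that case.
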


It is easy to show that if an isomorphism $B$ on $T^*M$ maps all exact 1-forms to closed 1-forms, then up to a diffeomorphism on $M$, $B$ has to be a locally constant scaling. By composing with a diffeomorphism we may assume $B$ maps each $T_x^*M$ to $T_x^*M$. If $u$ is a smooth function on $M$, then both $du$ and $udu = \frac{1}{2}d(u^2)$ are exact 1-forms. By the assumption, we have
\begin{equation}
    0 = d(B(udu)) = d(uB(du)) = du \wedge B(du).
\end{equation}
Since this holds for any $u$, this shows $B = fI$ where $I$ is the identity map and $f$ is a smooth function. Since $B(du) = fdu$ is closed for any $u$, this shows $f$ must be locally constant.

With the help of Runge Approximation Theorem, one can show that the same result holds if the isomorphism maps all divergence free exact 1-forms to closed 1-forms.
\begin{lem}\label{lem: constant scaling}
    Let $M$ be a smooth connected manifold of dimension $n \geq 3$, $B$ is a fiber-wise isomorphism on $T^*M$ such that $B(T^*_xM) = T^*_xM$ for all $x \in M$. Suppose for every smooth $\psi$ satisfying
    \begin{equation}
        \Delta_g \psi = 0
    \end{equation}
    the image $B (d\psi)$ is a closed 1-form, then $B = CI$ for some constant $C \neq 0$ and $I$ is the identity map.
\end{lem}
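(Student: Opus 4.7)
The plan is to test the hypothesis $d(B\,d\psi)=0$ pointwise at an interior point $p\in M^\circ$ using Proposition~\ref{prop: runge}. Choose $g$-normal coordinates centered at $p$, so that $g_{ij}|_p=\delta_{ij}$ and the Christoffel symbols vanish at $p$; in these coordinates the $g$-trace-free constraint on $\operatorname{Hess}_g(\psi)|_p$ coincides with ordinary trace-freeness, and $\operatorname{Hess}_g(\psi)|_p$ equals the coordinate Hessian. Writing $(B\,d\psi)_j=B^i_j\,\partial_i\psi$, the identity $d(B\,d\psi)=0$ evaluated at $p$ reads
\[
\bigl(\partial_k B^i_j-\partial_j B^i_k\bigr)|_p\,\partial_i\psi|_p + B^i_j|_p\,\partial_k\partial_i\psi|_p - B^i_k|_p\,\partial_j\partial_i\psi|_p=0.
\]
Proposition~\ref{prop: runge} realises any prescribed pair $(d\psi|_p,\operatorname{Hess}_g(\psi)|_p)$, subject to the trace-free constraint on the Hessian, as the one-jet at $p$ of a harmonic function. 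This allows me to separate an algebraic constraint on $B|_p$ (by choosing $d\psi|_p=0$) from a first-order constraint on $B$ at $p$ (by varying $d\psi|_p$).

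Setting $d\psi|_p=0$, the algebraic constraint becomes: for every symmetric trace-free matrix $H$, the quantity $B^i_j|_p\,H_{ki}$ is symmetric in $j,k$. I would test this with $H=e_a e_b^\top+e_b e_a^\top$, $a\neq b$ (which is automatically trace-free). Reading off the symmetry at $(j,k)=(c,a)$ with $c\notin\{a,b\}$ yields $B^b_c|_p=0$, and the assumption $n\geq 3$ guarantees that for any pair of distinct indices $b\neq c$ one can pick such a third index $a$. Thus $B|_p$ is diagonal in these coordinates. Reading off the same equation at $(j,k)=(a,b)$ then forces all diagonal entries of $B|_p$ to coincide. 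Hence $B|_p=f(p)\cdot\operatorname{id}_{T_p^*M}$ for a single scalar $f(p)$, and letting $p$ vary gives $B=f\,I$ on $M$ for a smooth function $f$.

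With $B=f\,I$, the hypothesis reduces to $df\wedge d\psi=0$ for every harmonic $\psi$. Applying Proposition~\ref{prop: runge} once more at an arbitrary $p\in M^\circ$ with any prescribed $\xi_0\in T_p^*M$ and trivial Hessian data produces a harmonic $\psi$ with $d\psi|_p=\xi_0$, so $df|_p\wedge\xi_0=0$ for every $\xi_0\in T_p^*M$, forcing $df|_p=0$. Connectedness of $M$ then forces $f$ to be a constant $C$, and the hypothesis that $B$ is a fiber-wise isomorphism forces $C\neq 0$. The main obstacle is the algebraic step: unlike the informal computation with $u$ and $u\,du$ that appears before the lemma, here only trace-free Hessian data is accessible, so the reduction of $B|_p$ to a scalar matrix genuinely requires the availability of a third coordinate index, which is precisely the content of $n\geq 3$.
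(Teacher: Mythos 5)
Your proposal is correct and follows essentially the same route as the paper: normal coordinates at $p$, Runge approximation to prescribe the $1$-jet and trace-free Hessian of a harmonic function, an algebraic argument forcing $B|_p$ to be scalar, and then varying $d\psi|_p$ to kill $df$. The only (cosmetic) difference is in the algebraic step: the paper tests with both diagonal trace-free matrices and off-diagonal symmetric matrices, whereas you use only the off-diagonal ones $e_ae_b^\top+e_be_a^\top$ together with the third-index trick, which makes the role of $n\geq 3$ slightly more visible but amounts to the same computation.
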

\begin{proof}
    Fix some interior point $p$, let $x = (x^1, \dots, x^n)$ be the normal coordinate at $p$. That is, the metric is the Kronecker delta at $p$ and the Christoffel symbol vanishes at $p$. Let $H$ be any symmetric matrix such that $\tr(H) = 0$, note that at $p$ this is the same as $\tr_g(H) = 0$ in normal coordinates. by Proposition \ref{prop: runge}, there exists some solution $\psi$ such that $d\psi|_p = 0$ and $\text{Hess}_g(\psi)|_p = H$. Since we choose the normal coordinate, this simply means $\partial^2_{jk}\psi|_p = H_{jk}$. Use the assumption that $B(d\psi)$ is closed, at $p$ we have
    \begin{equation}
        0 = d(B(d\psi))|_p = d(B^j_k\partial_j\psi dx^k)|_p = B^j_kH_{jl}dx^l \wedge dx^k.
    \end{equation}
    Thus $BH$ is symmetric for any symmetric $H$ with zero trace. For all pairs of $i \neq j$, test $B$ with the following matrices:
    \begin{enumerate}
        \item $H_{ii} = 1$, $H_{jj} = -1$ and 0 everywhere else;
        \item $H_{ij} = H_{ji} = 1$ and 0 everywhere else.
    \end{enumerate}
    Together we obtain $B$ must be of the form $B = fI$ for some smooth function $f$. On the other hand, if we choose $\psi$ to be such that $\text{Hess}_g(\psi)|_p = 0$ and $d\psi|_p = dx^i$ for $i = 1, \dots, n$, then at $p$ we have
    \begin{equation}
        0 = d(f\partial_j\psi dx^j)|_p = \partial_lfdx^i \wedge dx^l.
    \end{equation}
    As a result, we obtain $f \equiv C$ for some constant $C \neq 0$.
\end{proof}

% We briefly mention the analogue problem for Lorentzian manifold. Suppose $B$ satisfies the same conditions as in Lemma \ref{lem: constant scaling}, and maps every $du$ to some closed 1-form where $u$ satisfies $\Box_gu = 0$ on a Lorentzian manifold with timelike boundary. Suppose $N$ is a smooth spacelike cross-section of the Lorentzian manifold, and smooth boundary control can be obtained on such $N$. That is, for any $\varphi \in C^{\infty}(N)$, there exists boundary initial data such that the solution $u$ satisfies $u|_N = \varphi$. Then $B|_{T^*N}$ maps $d\varphi$ to a closed 1-form as well. Since $\varphi$ is arbitrary, the same argument as the discussion before Lemma \ref{lem: constant scaling} proves that $B|_{T^*N}$ is constant scaling. Choose different smooth spacelike cross-sections, $B$ is thus constant scaling on the Lorentzian manifold.

\subsection{Perturbation of the Magnetic Potential}\label{sec: proof of riemannian thm}

We first recall the boundary determination result from \cite{LU89}. The proposition has a specific construction formula, but we shall only state it as a deterministic result since it is all we need, for details we refer to the original paper.
\begin{prop}[{\cite[Proposition 1.3]{LU89}}]\label{prop: boundary determination}
    Suppose $M$ has dimension $n \geq 3$, consider any local coordinate $(x^1, \cdots, x^{n-1})$ on the boundary around some $p \in \partial M$. Then the Taylor series of $g$ at $p$ in boundary normal coordinate is determined by $\Lambda_g$.
\end{prop}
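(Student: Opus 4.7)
The plan is to establish that $\Lambda_g$ is a classical pseudodifferential operator of order one on $\partial M$ and then to recursively extract the Taylor series of $g$ from its full symbol. Choose boundary normal coordinates near $p$ so that, writing $x = (x', x_n)$ with $\partial M = \{x_n = 0\}$ locally, the metric takes the form $g = dx_n^2 + g_{\alpha\beta}(x', x_n)\,dx^\alpha dx^\beta$. In these coordinates the Laplace--Beltrami operator becomes
\begin{equation*}
\Delta_g = \partial_n^2 + E(x', x_n; \partial_{x'})\,\partial_n + Q(x', x_n; \partial_{x'}),
\end{equation*}
where $Q$ is a tangential differential operator of order two and $E$ of order one, each depending smoothly on $x_n$.

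The first step is a microlocal factorization $\Delta_g \equiv (\partial_n + A)(\partial_n + B) \bmod \Psi^{-\infty}$ in a collar of $\partial M$, where $A = A(x_n)$ and $B = B(x_n)$ are tangential classical pseudodifferential operators on $\partial M$ depending smoothly on $x_n$. Matching symbols at the principal level forces the principal symbol $b_1$ of $B|_{x_n = 0}$ to satisfy a Riccati-type symbolic identity whose elliptic root is $b_1(x', \xi') = -\sqrt{g^{\alpha\beta}(x',0)\,\xi_\alpha\xi_\beta}$, and a standard recursive construction then determines the lower order symbols of $A$ and $B$. Since solutions of $\Delta_g u = 0$ with boundary data $f$ are microlocally governed by the outgoing factor, one concludes that modulo smoothing $\Lambda_g = -B|_{x_n = 0}$, so $\Lambda_g$ is a classical pseudodifferential operator of order $1$ and its full symbol is determined by its action on $C^\infty(\partial M)$.

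The second step is to read off the Taylor coefficients of $g_{\alpha\beta}$ at $x_n = 0$ from the full symbol of $B|_{x_n = 0}$. The principal symbol $b_1$ determines $g_{\alpha\beta}(x', 0)$, i.e.\ the induced boundary metric. For the higher order coefficients I would expand $b \sim \sum_{j \leq 1} b_j$ and $a \sim \sum_{j \leq 1} a_j$, substitute into the factorization identity, differentiate in $x_n$ the appropriate number of times, and restrict to $x_n = 0$. At each stage the matching equations express $\partial_n^k g_{\alpha\beta}|_{x_n = 0}$ in terms of a particular symbol of $\Lambda_g$ together with lower order information already recovered, so iterating yields the full Taylor series of $g$ at $p$.

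The main obstacle, and the step requiring the most care, is verifying that at every order $k$ the new coefficient $\partial_n^k g_{\alpha\beta}|_{x_n = 0}$ enters the recursion in a way that can be uniquely solved for from the corresponding symbol of $\Lambda_g$ once lower order information is known. Concretely this reduces to checking that the linear map on symmetric two-tensors arising from the matching equations is non-degenerate (elliptic in $\xi'$), which is an algebraic but delicate symbolic calculation. Once that is established, the full Taylor expansion of $g$ in boundary normal coordinates is determined by $\Lambda_g$, proving Proposition \ref{prop: boundary determination}.
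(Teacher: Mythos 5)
The paper does not prove this proposition; it is quoted from Lee--Uhlmann \cite{LU89}, and your proposal is precisely the argument of that reference: factor $\Delta_g$ microlocally as $(\partial_n+A)(\partial_n+B)$ in boundary normal coordinates, identify $\Lambda_g$ with $-B|_{x_n=0}$ modulo smoothing, and recover $\partial_n^k g_{\alpha\beta}|_{x_n=0}$ recursively from the full symbol. The outline is correct (minor quibble: in boundary normal coordinates the coefficient of $\partial_n$ in $\Delta_g$ is the scalar $\tfrac12\partial_n\log|g|$, a zeroth-order tangential operator rather than first-order), and the non-degeneracy of the recursion that you flag as the delicate step is exactly the symbolic computation carried out in \cite{LU89}.
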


\begin{proof}[Proof of Theorem \ref{thm: riemannian perturb A}:]
    We first work with arbitrary metric $g$. Let $h$ be a smooth 1-form supported in the interior that will be determined later. Consider the perturbed Laplace equation
    \begin{equation}
        \Delta_{g, \varepsilon h}u_{\varepsilon} = 0,\quad u_{\varepsilon}|_{\partial M} = f.
    \end{equation}
    Then $u_{\varepsilon} = u+i\varepsilon v^h + O(\varepsilon^2)$, where $u$ and $v^h$ solves
    \begin{align}
        &\Delta_g u = 0, \quad u|_{\partial U} = f;\\
        &\Delta_g v = h(\nabla_g u) + \div_g(uh^{\sharp}), \quad v|_{\partial M} = 0,
    \end{align}
    where $\nabla_gu$ is the gradient of $u$ with respect to $g$. For notation simplicity, denote derivative of DN map as
    \begin{equation}
        N(f, h) = -i\partial_{\varepsilon}|_0 \Lambda_{g, \varepsilon h}(f) = \partial_{\nu} v|_{\partial M}.
    \end{equation}
    Note that we do not have the $\langle h, \nu\rangle$ term because $h$ supports in the interior. Denote $V_g$ the volume form on $M$ with respect to $g$, and $V_{g, \partial}$ the volume form on $\partial M$ with respect to $g|_{T\partial M}$. Using divergence theorem and the fact that $h$ supports in the interior, we have
    \begin{equation}
        \int_{\partial M} N(f, h) V_{g, \partial} = \int_{\partial M} \partial_{\nu}v V_{g, \partial} = \int_M \Delta_gv V_g = \int_M h(\nabla_gu)V_g.
    \end{equation}
    From Proposition \ref{prop: boundary determination}, the metric on the boundary is known, in particular this means the left hand side of the above equation is fully computable.

    We now choose $h$. For every fixed point $x_0$ in the interior let $x = (x^1, \cdots, x^n)$ be a local coordinate in the interior. Let
    \[
    \varphi^{\delta}(x) = \delta^{-n}\varphi\left(\frac{x-x_0}{\delta}\right)
    \]
    be a compactly supported approximate of identity at $x_0$ as $\delta \to 0$. Then for any smooth 1-form $\omega$, we have
    \begin{equation}
        \int_M \varphi^{\delta}\omega(\nabla_g u)|g|_x^{\frac{1}{2}} dx \xrightarrow{\delta \to 0} |g(x_0)|^{\frac{1}{2}}\omega(\nabla_g u)|_{x_0},
    \end{equation}
    where $|g|_x$ is the determinant of $g$ in coordinate $x$. This means for every initial data $f$ and every point $x_0$ in the interior, we can explicitly compute
    \begin{equation}
        \lim_{\delta \to 0}\int_{\partial M} N(f, \varphi_{x_0}^{\delta}\omega) V_{g,\partial} = |g(x_0)|^{\frac{1}{2}}\omega(\nabla_gu)|_{x_0}.
    \end{equation}
    Invariantly speaking, we can thus compute the coupled term $\nabla_gu V_g$. If $\Lambda_{g_1,A} = \Lambda_{g_2, A}$ for all small $A$, this means
    \begin{equation}\label{eq: same grad}
        \nabla_{g_1} u_1 V_{g_1} = \nabla_{g_2}u_2 V_{g_2}
    \end{equation}
    where $u_1$ and $u_2$ solves
    \begin{equation}
        \Delta_{g_j} u_j = 0, \quad u_j|_{\partial M} = f, \quad j = 1, 2.
    \end{equation}
    This gives a coordinate invariant relationship between $du_1$ and $du_2$, via
    \begin{equation}
        du_1 = \frac{|g_2|^{\frac{1}{2}}}{|g_1|^{\frac{1}{2}}}g_1g_2^{-1}du_2,
    \end{equation}
    where $g_2^{-1}du_2$ refers to the lift of $du_2$ to a vector field via $g_2$, and $g_1g_2^{-1}du_2$ is the lift of $g_2^{-1}du_2$ to a 1-form again via $g_1$. The equation is clearly coordinate independent, and in local coordinate we have
    \begin{equation}
        \partial_j u_1 = \frac{|g_2|^{\frac{1}{2}}}{|g_1|^{\frac{1}{2}}} (g_1)_{jl} (g_2)^{lk} \partial_k u_2 := B^k_j \partial_k u_2.
    \end{equation}
    The operator $B$ is thus a fiber-wise isomorphism on the cotangent bundle, mapping the differential of the solution of the Laplace equation to an exact one form.
    
    By Lemma \ref{lem: constant scaling}, $B = CI$ for some constant $C \neq 0$ and $I$ is the identity map.
    That is,
    \begin{equation}
        \frac{|g_2|^{\frac{1}{2}}}{|g_1|^{\frac{1}{2}}} g_1g_2^{-1} = CI.
    \end{equation}
    Since we assumed the dimension $n \geq 3$, take determinant both sides and simplify, we obtain
    \begin{equation}
        \frac{|g_2|^{\frac{1}{2}}}{|g_1|^{\frac{1}{2}}} = C^{\frac{n}{n-2}}.
    \end{equation}
    Hence
    \begin{equation}
        g_1g_2^{-1} = C^{-\frac{2}{n-2}}I.
    \end{equation}
    Use the fact that $g_1$ and $g_2$ agree on boundary tangential directions by Proposition \ref{prop: boundary determination}, $C = 1$. That is, $g_1 = g_2$.
    % Thus $u_1 = Cu_2+D$ for some constant $D$. Since both of $u_1$ and $u_2$ have the same boundary Dirichlet data and this holds for any smooth initial data $f$, we must have $u_1 = u_2$ for all $f$. Hence
    % \begin{equation}
    %     \frac{|g_2|^{\frac{1}{2}}}{|g_1|^{\frac{1}{2}}} g_1 g_2^{-1} = I,
    % \end{equation}
    % meaning $g_1$ and $g_2$ are conformally equivalent. Take determinant we obtain
    % \begin{equation}
    %     |g_1|^{\frac{n-2}{2}} = |g_2|^{\frac{n-2}{2}},
    % \end{equation}
    % which gives $g_1 = g_2$ since $n \geq 3$.
\end{proof}

%%%%%%%%%%%%%%%%%%%%%%%%%%%%%%%%%%%%%%%%%%%%%%%%%%%%%%%%%%%%%%%%%%%%%

\section{Lorentzian Calder\'on problem}\label{sec: Lorentzian}

\subsection{Preliminaries}\label{sec: lorentzian preliminary}

We will be working on admissible Lorentzian manifolds, the definition is a slight modification of \cite[Definition 1.1]{HU19}.
\begin{defn}\label{def: admissible}
    Let $n \geq 3$. Let $(M,g)$ be a smooth connected $n$-dimensional Lorentzian manifold with non-empty boundary; thus, $g$ has signature $(-,+,\dots,+)$. We call $(M,g)$ \emph{admissible} if
    \begin{enumerate}
        \item there exists a smooth, proper, surjective function $\tau \colon M \to \mathbb{R}$, such that $d\tau$ is everywhere timelike;
        \item the boundary $\partial M$ is timelike, i.e.\ the induced metric $\bar{g} := g|_{T\partial M \times T\partial M}$ is Lorentzian;
        \item $\partial M$ is \emph{strictly null-convex}: if $\nu$ denotes the outward pointing unit normal vector field on $\partial M$, then
        \[
        II(V,V) = g(\nabla_V \nu, V) \geq 0 \tag{1.1}
        \]
        for all null vectors $V \in T_p \partial M$, where $II$ is the second fundamental form.
    \end{enumerate}
\end{defn}

We briefly introduce concepts in microlocal analysis, for details we refer to \cite{Hor71, DH72, Hor85III, Hor85IV, Dui96, Hin25}. Given a smooth submanifold $K$, we denote $N^*K$ its conormal bundle. A distribution $u$ is called a conormal distribution if locally it can be written as an oscillatory integral of the form
\[
u(x) = \int e^{i\phi(x, \theta)}a(x, \theta)d\theta
\]
where $\phi$ is the phase function locally parameterizing $N^*K$, and $a$ is a smooth symbol, see \cite{Hor71}. More generally, when $N^*K$ is replaced with some Lagrangian submanifold of $T^*M$, the distribution is called a Lagrangian distribution. An operator mapping functions on $X$ to functions on $Y$ is called a Fourier Integral Operator (FIO) if the corresponding Schwartz kernel is a Lagrangian submanifold of $T^*Y \times T^*X$ with respect to the twisted symplectic form. In this case the Lagrangian submanifold is also referred to as a canonical relation. When $X = Y$ and the Lagrangian submanifold is the diagonal of $T^*X \times T^*X$, we call the operator a pseudodifferential operator. The wavefront set of any Lagrangian distribution is a subset of the Lagrangian submanifold, see \cite{Hor85III, Hor85IV}.

For any operator $P$ acting on distributions on a Lorentzian or Riemannian manifold $(M, g)$, there exists a natural way to turn it into an operator acting on half-density valued distributions (for definition of half-density, see \cite{Hin25}). For any half-density valued distribution $\omega$, define
\[
\tilde{P} \omega = |g|^{\frac{1}{4}}P(|g|^{-\frac{1}{4}}\omega).
\]
Here $|g|^{\frac{1}{4}} = |g|^{\frac{1}{4}}_x|dx|^{\frac{1}{2}}$ where $|g|_x$ is the determinant of $g$ in local coordinate $x$, and $|dx|^{\frac{1}{2}}$ is the standard half-density in coordinate $x$. Note that this conjugation obviously depends on the metric, one can also conjugate $P$ with other half-densities to avoid this. The benefit of $\tilde{P}$ as an operator on half-density valued distribution is that if $P$ is an FIO, then the principal symbol of $\tilde{P}$, denoted by $\sigma(\tilde{P})$, can be invariantly defined modulo lower order terms as a half-density on the Lagrangian submanifold (for simplicity we ignore the Maslov bundle), see \cite{Hor71} for details.

When the Dirichlet data is a conormal distribution corresponding to some conormal bundle $N^*K$, then the solution is a Lagrangian distribution corresponding to the flowout Lagrangian of $N^*K$. The flowout Lagrangian refers to the union of the null-bicharacteristics originated from $N^*K \cap \mathcal{H}$ where $\mathcal{H}$ is the hyperbolic set (for definition of hyperbolic set and null-bicharacteristic, we refer to \cite[Section 24]{Hor85III}). We also refer to \cite{SY18} for an optics construction of the solution.

The DN map $\Lambda_{g,A,q}$ has been proved to contain both pseudodifferential part and Fourier Integral Operator part, see for example \cite{SY18} and \cite{YZ25}. The corresponding canonical relation for the FIO part is given by the powers of lens relation. Denote $\mathcal{H}$ the hyperbolic set and $\pi: T^*M|_{\partial M} \to T^*\partial M$. We can define the lens relation $L$ on a non-trapping admissible Lorentzian manifold as follows.
\begin{defn}{\label{def: lens relation}}
    Let $(M, g)$ be a non-trapping admissible Lorentzian manifold. For any $(x', \xi') \in \mathcal{H}$, there exists a unique inward pointing lightlike direction $\xi$ such that $\pi(x', \xi) = (x', \xi')$. Let $\Gamma$ be the unique future pointing null-bicharacteristic from $(x', \xi)$, and $(y', \eta)$ is the exit point of $\Gamma$. The lens relation is the map
    \[
    L: \mathcal{H} \to \mathcal{H}, \quad (x', \xi') \to \pi(y', \xi').
    \]
    The projection of $\Gamma$ on $M$ is a null-geodesic, and we will refer to this null-geodesic as the unique geodesic corresponding to $(y', \eta'; x', \xi')$. 
\end{defn}
% Lens relation on a non-trapping admissible Lorentzian manifold is the map from $\mathcal{H}$ to itself defined in the following way: for any $(x', \xi') \in \mathcal{H}$, let $\Gamma$ be the unique future pointing null-bicharacteristic from $(x, \xi')$, and $(y, \eta)$ is the exit point of $\Gamma$, then $L(x', \xi') = \pi(y, \eta)$. The projection of $\Gamma$ on $M$ is a null-geodesic, and we will refer to this null-geodesic as the unique geodesic corresponding to $(y', \eta'; x', \xi')$. 
For more details of lens relation and DN map on a Lorentzian manifold, see for example \cite{SY18} and \cite{YZ25}. Let $U, V \subset \partial M$ be disjoint, denote $\Lambda^{U,V}_{g,A,q}$ the DN map restricted to $V \times U$, that is the Dirichlet data $f$ in \eqref{eq: general wave equation} is supported in $U$ and
\[
\Lambda^{U,V}_{g,A,q}(f) = \partial_{\nu}u|_{V}.
\]
In \cite{YZ25}, the principal symbol of $\Lambda^{U,V}_{g,A,q}$ is computed.

\begin{prop}[{\cite[Proposition 3.5]{YZ25}}]\label{prop: principal symbol formula}
    View $\Lambda^{U_0,U_k}_{g,A,q}$ as an operator on half-density valued distributions via
    \[
    \Lambda^{U_0,U_k}_{g,A,q}(f|\bar{g}|^{1/4}) := (\Lambda^{U_0,U_k}_{g,A,q}f)|\bar{g}|^{1/4}.
    \]
    Suppose $L^k(x_0', \xi^{0\prime}) = (x_k', \xi^{k\prime})$ for some $k \geq 1$, $x_0' \in U_0$, $x_k' \in U_k$, and $L$ is the lens relation. Then microlocally in a conic neighborhood of $(x_k', \xi^{k\prime}; x_0', \xi^{0\prime}) \in T^*U_k \times T^*U_0$, the operator $\Lambda^{U_0, U_k}_{g,A,q}$ is an elliptic Fourier integral operator of order 1 whose associated canonical relation is given by
    \[
    \{(L^k(x', \xi'); x', \xi'): (x',\xi') \in T^*U_0 \cap \mathcal{H}\}.
    \]
    Its principal symbol is given by
    \[
    \sigma(\Lambda^{U_0,U_k}_{g,A,q})|_{(x_k',\xi^{k\prime}; x_0',\xi^{0\prime})} = \pm 2i(-1)^k\exp\left(i\int_{\gamma^b} A(\dot{\gamma}^b)\right)|\xi^k_n|^{1/2}|\xi^0_n|^{1/2}(L^{-k})^*,
    \]
    where
    \begin{itemize}
        \item $\gamma^b$ is the future pointing broken null-geodesic that is the projection of the future pointing broken bicharacteristic connecting
        \[
        (x_0', \xi^{0\prime}) \to (x_1', \xi^{1\prime}) \to \cdots \to (x_k', \xi^{k\prime})
        \]
        (note that $\int_{\gamma^b} A(\dot{\gamma}^b)$ is independent of parametrization of $\gamma^b$);
        % \item $L$ is the lens relation, so $L^{-k}(x_k', \xi^{k\prime}) = (x_0', \xi^{0\prime})$;
        \item the sign is $+$/$-$ if $(x_0', \xi^{0\prime})$ is a past/future pointing covector.
    \end{itemize}
\end{prop}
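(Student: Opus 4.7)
The plan is to construct a microlocal parametrix for the Dirichlet initial boundary value problem \eqref{eq: general wave equation} when the data $f$ is a conormal distribution at $(x_0', \xi^{0\prime}) \in \mathcal{H}$, and then read off the principal symbol of $\Lambda^{U_0, U_k}_{g,A,q}$ from the Neumann trace of the solution on $U_k$. Since $(x_0', \xi^{0\prime})$ is hyperbolic, $\xi^{0\prime}$ lifts to exactly two null covectors at $x_0'$; the support condition $\supp u \subset J^+(\supp f)$ selects the future-pointing lift, and the sign $\pm$ in the stated formula records whether $\xi^{0\prime}$ is itself past or future pointing, governing whether that lift lies above $(x_0', \xi^{0\prime})$ in the wavefront set of the Schwartz kernel. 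Near $U_0$ the solution is built as a Lagrangian distribution supported on the flowout of the hyperbolic part of the conormal bundle of the singular support of $f$, with initial amplitude chosen to match the Dirichlet trace modulo smoothing, following the optics constructions of \cite{SY18} and \cite{YZ25}.

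Next I would propagate this parametrix through the bulk using the first-order transport equation for principal symbols of Lagrangian distributions annihilated by $\Box_{g,A,q}$, viewed as an operator on half-densities via the metric volume. A direct expansion gives
\[
\Box_{g,A,q} = \Box_g - 2ig^{jk}A_j\partial_k + (\text{order }0),
\]
so the magnetic part contributes $2A^k\xi_k$ to the subprincipal symbol, while the $\Box_g$ half-density transport accounts for geometric spreading and produces the natural half-density on the flowout. Integrating the magnetic contribution along a null-bicharacteristic and using $g^{jk}\xi_k\,ds = -\tfrac{1}{2}dx^j$ to convert from the Hamiltonian parameter to the arc parameter turns $-2i\int A^k\xi_k\,ds$ into $i\int A(\dot\gamma)\,ds$ on each segment. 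Concatenating the $k$ geodesic segments across the $k-1$ interior reflections yields the full factor $\exp(i\int_{\gamma^b} A(\dot\gamma^b))$, and the potential $q$ together with the zero-order commutator terms contribute strictly below principal order.

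At each of the $k$ reflections, strict null-convexity together with the hyperbolicity of each intermediate covector $(x_j', \xi^{j\prime})$ guarantees transversal intersection with $\partial M$. The Dirichlet condition $u|_{\partial M}=0$ away from $U_0$ then dictates that the reflected branch is the negative of the incoming branch microlocally, producing the cumulative sign $(-1)^k$. The standard restriction formula for a Lagrangian distribution whose Lagrangian projects locally as a double cover of $T^*\partial M$ at a transversal reflection point introduces a $|\xi^j_n|^{-1/2}$ Jacobian factor at each intermediate reflection from each branch; these cancel pairwise between consecutive segments and leave uncancelled only the boundary factors $|\xi^0_n|^{1/2}$ and $|\xi^k_n|^{1/2}$ at the two endpoints. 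The Neumann trace $(\partial_\nu - i\langle A, \nu\rangle)u|_{U_k}$ is extracted by a first-order differential operator with principal symbol $i\langle \nu, \xi\rangle$; adding the incoming and reflected contributions at the final reflection yields the overall prefactor $\pm 2i$, with the sign carried from the choice of null lift made at $(x_0', \xi^{0\prime})$.

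The principal technical obstacle is the bookkeeping of signs, Maslov indices, and orientation conventions when translating between the $T^*U_0$ and $T^*U_k$ parametrizations of the canonical relation, which is precisely what the factor $(L^{-k})^*$ records. I would handle this by first proving the case $k=1$ through a direct single-reflection calculation matching the local boundary analysis of \cite{SY18}, and then inducting on $k$ via FIO composition along $\mathrm{graph}(L^k) = \mathrm{graph}(L)\circ \mathrm{graph}(L^{k-1})$: this composition multiplies the half-densities, adds the magnetic phase integrals over successive segments, and concatenates the sign contributions, which is exactly the structure of the stated formula. Ellipticity is immediate, since $|\xi^j_n|$ is bounded away from zero on the hyperbolic set, so the operator is an elliptic FIO of order $1$.
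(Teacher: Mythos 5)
The paper does not actually prove this proposition: it is imported verbatim from \cite{YZ25} (with the underlying single-reflection analysis going back to \cite{SY18}), so there is no in-paper argument to compare against. That said, your sketch follows exactly the route those references take: a geometric-optics parametrix on the flowout Lagrangian of the hyperbolic part of the conormal bundle of the data, the half-density transport equation whose magnetic subprincipal term $2A^k\xi_k$ integrates along the bicharacteristic to the holonomy factor $\exp\bigl(i\int_{\gamma^b}A(\dot\gamma^b)\bigr)$, Dirichlet reflection at strictly null-convex boundary points, and extraction of the $|\xi_n|^{1/2}$ factors and the $\pm 2i$ from the trace and Neumann maps. The central computation (the sign-consistent conversion $-2i\int A^k\xi_k\,ds = i\int_\gamma A(\dot\gamma)$ under the convention $p=-g^{jk}\xi_j\xi_k$) is correct and is the heart of the formula.

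Two loose ends worth tightening. First, your sign accounting is internally inconsistent: $k$ segments have only $k-1$ interior reflections, so attributing the factor $(-1)^k$ to reflections alone undercounts by one; the residual $-1$ has to be produced by the base case $k=1$ (where it comes out of the relation between the fixed overall $\pm$ convention, the orientation of $\xi^1_n$ relative to the outward normal at the exit point, and the Neumann trace), after which your induction $\mathrm{graph}(L^k)=\mathrm{graph}(L)\circ\mathrm{graph}(L^{k-1})$ correctly multiplies by $-1$ per additional reflection. Second, you assert but do not compute the Maslov/Keller contribution at the transversal reflection points; for this to yield a clean $(-1)$ per bounce rather than a factor of $e^{\pm i\pi/2}$, one needs the transversality of the intersection of the flowout with $T^*M|_{\partial M}$ at hyperbolic points (guaranteed here by admissibility), and this should be said explicitly. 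Neither issue is a wrong idea, but both must be resolved in the explicit $k=1$ computation for the stated constants to be trusted.
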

Finally, we record the well-known equation for computing principal symbols of wave equation which will be used in the proof of perturbation of the metric. If $\tilde{u}$ and $\tilde{v}$ are a half-density valued Lagrangian distribution, $P_g = |g|^{\frac{1}{4}}(\Box_g+q)|g|^{-\frac{1}{4}}$, and $P_g\tilde{u} = \tilde{v}$, then
\[
\mathcal{L}_{H_p}\sigma(\tilde{u}) = i\sigma(\tilde{v}),
\]
where $H_p$ is the Hamiltonian vector field with respect to the principal symbol of $P_g$ and $\mathcal{L}_{H_p}$ is the Lie derivative along the flowout Lagrangian. We refer to \cite{DH72} for more details.

\subsection{Perturbation of The Electromagnetic Potential}\label{sec: proof of theorem 1}

\begin{proof}[Proof of Theorem \ref{thm: lorentzian perturb A}]
    % For simplicity, we assume $\mathcal{A} = 0$, the proof for general $\mathcal{A}$ is the same.
    By \cite[Theorem 4.3]{SY18}, the lens relation $L$ can be recovered from the DN map. Let $L(x', \xi') = (y', \eta')$, then they are connected by a unique null-bicharacteristic, whose projection $\gamma$ is a null-geodesic. Note that perturbing $A$ does not change the trajectory of the null-geodesic, but only adds spin to it. In particular, by Proposition \ref{prop: principal symbol formula} this means for any $A \in \mathcal{B}_{\delta}(\mathcal{A})$,
    \[
    \left.\frac{\sigma(\Lambda_{g,A,q})}{\sigma(\Lambda_{g,\mathcal{A},q})}\right|_{(y', \eta';x',\xi')} = \exp\left(i\int_{\gamma}(A-\mathcal{A})(\dot{\gamma})\right).
    \]

    Suppose $\gamma$ does not pass through point $x$, then it is clear that for all $A$ such that $A - \mathcal{A}$ is supported in a sufficiently small neighborhood of $x$,
    \begin{equation}\label{eq: exp int equal 1}
        \left.\frac{\sigma(\Lambda_{g,A,q})}{\sigma(\Lambda_{g,\mathcal{A},q})}\right|_{(y', \eta';x',\xi')} = 1.
    \end{equation}
    On the other hand, suppose $x = \gamma(t_0)$ for some $t_0$, and let $U$ be some neighborhood of $x$, then $\gamma(t) \in U$ for $t \in [t_0 - \varepsilon, t_0 + \varepsilon]$ and some $\varepsilon > 0$. Consider some $\phi \in C^{\infty}_c(\mathbb{R})$ such that $\supp(\phi) \subset (-\varepsilon, \varepsilon)$ and $\int \phi = 1$. Let $A'$ be such that
    \[
    A'(\dot{\gamma}(t)) = \phi(t - t_0), \quad t \in (t_0 - \varepsilon, t_0 + \varepsilon).
    \]
    One can easily extend $A'$ to be a smooth 1-form supported in $U$, and divide by a sufficiently large number $C$ so that it is $\delta$ close to 0 in $C^k$ topology. In particular, this shows that $A = \mathcal{A} + A' \in \mathcal{B}_{\delta}(\mathcal{A}; U)$ satisfies
    \begin{equation*}
        \left.\frac{\sigma(\Lambda_{g,A,q})}{\sigma(\Lambda_{g,\mathcal{A},q})}\right|_{(y', \eta';x',\xi')} = \exp\left( \frac{i}{C}\int_{-\varepsilon}^{\varepsilon}\phi \right) \neq 1.
    \end{equation*}
    The two cases give the criterion of whether $\gamma$ passes through a point $x$, hence we proved \eqref{eq: determine gamma from A}.

    To show the set of such $A$ is generic in $\mathcal{B}_{\delta}(\mathcal{A}; U)$ when $x \in \gamma$ and $U$ is a neighborhood of $x$, we need to show it is open and dense. Suppose $A \in \mathcal{B}_{\delta}(\mathcal{A}; U)$ satisfies
    \begin{equation}\label{eq: exp int not 1}
        \left.\frac{\sigma(\Lambda_{g,A,q})}{\sigma(\Lambda_{g,\mathcal{A},q})}\right|_{(y', \eta';x',\xi')} \neq 1.
    \end{equation}
    then it is clear that it holds in a sufficiently small neighborhood of $A$ in $\tilde{B}_{\delta}(\mathcal{A}; U)$ with respect to $C^k$ topology. On the other hand, to show density, if $A$ satisfies \eqref{eq: exp int equal 1}, same argument shows that there exists some $B$ arbitrarily close to $A$ such that $\supp(B - A) \subset U$ and 
    \[
    \left.\frac{\sigma(\Lambda_{g,B,q})}{\sigma(\Lambda_{g,A,q})}\right|_{(y', \eta';x',\xi')} \neq 1.
    \]
    Hence one can find $B \in \mathcal{B}_{\delta}(\mathcal{A}; U)$ arbitrarily close to $A$ and \eqref{eq: exp int not 1} holds for $B$ and $\mathcal{A}$. 
    
    Finally, the conformal class can be constructed because every non-trapped null-geodesic can be constructed and the manifold is non-trapping.

\end{proof}

\begin{rem}\label{rem: ab effect}
    It is easy to see from the proof that when constructing the perturbation, we don't want $A - \mathcal{A}$ to be an exact 1-form supported in the interior, otherwise the integral along the geodesic would still give 0. Another way to see this is that if $A - \mathcal{A} = d\phi$ for some compactly supported $\phi$ in the interior, then the DN map is the same for $A$ and $\mathcal{A}$ by the gauge equivalence (see \cite[Lemma 2.2]{SY18}). A change by closed form, on the other hand may change the DN map, and one can use them to detect trajectory. This aligns with the Aharonov-Bohm effect in physics (see \cite{AB59, AB61}) , which roughly speaking states that the particle may experience a phase shift caused by the electromagnetic potential even when the induced electromagnetic field is 0. Mathematically, $A$ provides a connection on the principal $U(1)$-bundle, and $F = dA$ is the curvature of the connection. The Aharonov-Bohm effect is simply saying that the holonomy depends not just on the curvature, but also the topology of the path.
\end{rem}

\subsection{Perturbation of The Metric}\label{sec: proof of theorem 2}

\begin{proof}[Proof of Theorem \ref{thm: lorentzian perturb g}]
    Denote $g_{\epsilon} = g - \epsilon h$ for $h$ some compactly supported smooth symmetric 2-tensor that will be determined later. Let $u_{\epsilon}$ and $u$ be the solution to \eqref{eq: wave} with respect to $g_{\epsilon}$ and $g$, respectively. Then
    \[
    u_{\epsilon} = u + \epsilon v + O(\epsilon^2)
    \]
    with $v$ solves
    \begin{equation}\label{eq: equation for v}
        \begin{cases}
            (\Box_{g}+q_g)v = -|g|^{-\frac{1}{2}}\partial_j(|g|^{\frac{1}{2}}h_g^{jk}\partial_k u) + \frac{1}{2}\partial_j(\tr_g(h))g^{jk}\partial_ku - q^1u\\
            v|_{\partial M} = 0, \quad \partial_{\nu}v|_{\partial M} = \partial_{\epsilon}|_0\Lambda_{g_\epsilon}(f),
        \end{cases}
    \end{equation}
    where $q_{g_{\epsilon}} = q_g + \epsilon q^1 + O(\epsilon^2)$, $\tr_g(h) = \tr(g^{-1}h)$, and $h_g^{jk}$ is the $jk$-entry of the matrix $g^{-1}hg^{-1}$. Note that $q^1$ depends on $g$ and $h$, and since the potential depends locally on the metric, $\supp(q^1) \subset \supp(h)$. Let $P_g = |g|^{\frac{1}{4}}\Box_g|g|^{-\frac{1}{4}}$, $\tilde{u} = u|g|^{\frac{1}{4}}$ and $\tilde{v} = v|g|^{\frac{1}{4}}$, the equation for the half-densities is
    \begin{equation}\label{eq: equation for v tilde}
        (P_{g}+q_g)\tilde{v} = -h_g^{jk}\partial_k\partial_j u |g|^{\frac{1}{4}} + l.o.t.
    \end{equation}
    where $l.o.t.$ denotes the lower order terms, that is the terms that only contains $u$ or $\nabla u$. 
    
    Let $L(x', \xi') = (y', \eta')$ and $\gamma$ be the unique null-geodesic corresponds to $(y', \eta'; x', \xi')$. Let $K \subset \partial M$ be such that the conormal bundle $N^*K$ contains $(x', \xi')$. If $f = u|_{\partial M}$ is a conormal distribution with respect to $N^*K$, then $u$ is a Lagrangian distribution with canonical relation being the flowout Lagrangian from the hyperbolic part of $N^*K$ along the Hamiltonian vector fields (see for example \cite{HUZ22}, or the optics construction in \cite{SY18}). As a result, $v$ is a Lagrangian distribution on the same flowout Lagrangian (one can show this by a symbolic construction as in \cite{MU79}, see also \cite{YZ25}). In particular, the principal symbol satisfies the following transport equation
    \[
    \mathcal{L}_{H_p}\sigma(\tilde{v}) = ih_g^{jk}\xi_j\xi_k\sigma(\tilde{u})
    \]
    where recall that $H_p$ is the Hamiltonian vector field with respect to $g$ and $\mathcal{L}_{H_p}$ is the Lie derivative along the flowout Lagrangian. Let $w_1, \dots, w_n$ be linearly independent vector fields on the flowout Lagrangian obtained via pushforward along the Hamiltonian flow $\varphi_t$, that is
    \[
    w_j|_{\varphi_t(x, \xi)} = \varphi_{t*}(w_j|_{(x, \xi)}), \quad \mathcal{L}_{H_p} w_j = 0.
    \]
    Then we have
    \[
    H_p(\sigma(\tilde{v})(w_1, \cdots, w_n)) = (\mathcal{L}_{H_p}\sigma(\tilde{v}))(w_1, \cdots, w_n) = ih_g^{jk}\xi_j\xi_k\sigma(\tilde{u})(w_1, \cdots, w_n).
    \]
    Denote $\Gamma:[0, T] \to T^*M$ a null-bicharacteristic in the flowout Lagrangian, and let $\gamma$ be its projection, the principal symbol of $\tilde{v}$ satisfies
    \[
    \sigma(\tilde{v})|_{\Gamma(t)}(w_1, \cdots, w_n) = iC\int_{\gamma} (g^{-1}hg^{-1})(\dot{\gamma}^{\flat}, \dot{\gamma}^{\flat}) = iC\int_{\gamma}h(\dot{\gamma}, \dot{\gamma})
    \]
    where we used the fact that
    \[
    \sigma(\tilde{v})|_{\Gamma(0)}(w_1, \cdots, w_n) = 0, \quad \sigma(\tilde{u})|_{\Gamma(t)}(w_1, \cdots, w_n) = C \neq 0 \quad \forall t \in [0, T].
    \]
    Now suppose $x \in \gamma$, $U$ is some neighborhood of $x$ and $N^*K$ contains $(x', \xi')$. It is clear that a generic choice of smooth symmetric 2-tensor $h$ supported in $U$ would satisfy that
    \[
    \int_{\gamma}h(\dot{\gamma}, \dot{\gamma}) \neq 0.
    \]
    That is to say, for any Dirichlet data $f$ which is a conormal distribution singular at $(x', \xi')$, a generic choice of smooth symmetric 2-tensor $h$ supported in $U$ gives that $\partial_{\epsilon}|_0\Lambda_{g-\epsilon h}(f)$ is not smooth at $(y', \eta')$. 
    
    On the other hand, if $x \notin \gamma$, then for sufficiently small neighborhood $U$ of $x$ and sufficiently small $K$, the projection of the flowout Lagrangian from the hyperbolic part of $N^*K$ is disjoint from $U$. From \eqref{eq: equation for v} and the assumption that potential term depends locally on $g$,
    \[
    (\Box_g + q_g)v \in C^{\infty}(M^\circ), \quad v|_{\partial M} = 0.
    \]
    As a result, $\partial_{\epsilon}|_0\Lambda_{g-\epsilon h}(f)$ is smooth for any smooth symmetric 2-tensor $h$ supported in $U$.

    Finally, the conformal class can be constructed because every non-trapped null-geodesic can be constructed and the manifold is non-trapping.

\end{proof}

\begin{rem}
    An obvious choice of $h$ such that
    \[
    \int_{\gamma}h(\dot{\gamma}, \dot{\gamma}) = 0
    \]
    is when $h = fg$ where $f$ is any compactly supported function. This choice of $h$ corresponds to perturbation within the conformally equivalence class, which does not change the DN map when working with Yamabe equation \eqref{eq: Yamabe}. Note that it does not mean we can not use this $h$ for other types of potential terms, since for many of them conformal equivalence is not a gauge equivalence. This simply suggests that one should check lower order terms of \eqref{eq: equation for v} in this case. In general, a simple choice can be
    \[
    h = g\begin{pmatrix}
        \phi &&&\\
        &0&&\\
        &&\ddots&\\
        &&&0
    \end{pmatrix} = \begin{pmatrix}
        -\phi & 0\\
        0 & 0
    \end{pmatrix}
    \]
    for some generic compactly supported $\phi$, in any semi-geodesic normal coordinate (see for example \cite[Lemma 2.3]{SY18} or \cite{Pet69})
    \[
    g = -(dx^1)^2+g_{\alpha \beta}(x)dx^{\alpha} \otimes dx^{\beta}, \quad \alpha,\beta \geq 2.
    \]
\end{rem}

\section*{Declarations}
The author states that there is no conflict of interest, and there is no associated data used in the paper.

\bibliographystyle{abbrv}
\bibliography{2025Oct29}

\end{document}